\newtheorem{theorem}{Theorem}[section]
\newtheorem{lemma}[theorem]{Lemma}
\newtheorem{corollary}[theorem]{Corollary}
\newtheorem{conjecture}[theorem]{Conjecture}
\newtheorem{question}[theorem]{Question}
\theoremstyle{definition}
\theoremstyle{remark}
\numberwithin{equation}{section}
\DeclareMathOperator{\sign}{sign}
\DeclareMathOperator{\td}{td}
\DeclareMathOperator{\wt}{wt}
\title{On circle actions with exactly three fixed points}
\author{Michael Wiemeler}
\address{
Mathematisches Institut\\ Universit\"at M\"unster\\Einsteinstrasse 62\\D-48149 M\"unster\\Germany
}
\email{wiemelerm@uni-muenster.de}
\subjclass[2020]{57S15, 57R15, 57R20, 57R85, 55N91}
\keywords{circle actions; few fixed points; orientable, spin and unitary manifolds}
\date{6th August 2023}
\begin{document}
\begin{abstract}
  We study smooth, closed orientable \(S^1\)-manifolds \(M\)  with exactly \(3\) fixed points.
  We show that the dimension of \(M\) is of the form \(4\cdot 2^a\) or \(8\cdot(2^a+2^b)\) with \(a,b\geq 0\) and \(a\neq b\).
  Moreover, under the extra assumption that \(M\) is spin or unitary we determine exactly the dimension of \(M\).
\end{abstract}

\maketitle

\section{Introduction}

The study of symmetries of closed manifolds is an interesting subject with a long history.
One question in this study is the following:

\begin{question}
  \label{sec:introduction}
  Given a class \(\mathfrak{M}\) of closed smooth manifolds and positive integers \(n\) and \(k\), does there exist a manifold \(M\) in \(\mathfrak{M}\) of dimension \(n\) admitting a smooth \(S^1\)-action with exactly  \(k\) fixed points?
\end{question}

% Usually when studying this question one restricts to actions satisfying some kind of regularity condition, such as continuous, locally smooth or smooth actions.
% Here we restrict ourselves to smooth actions.

Related to Question~\ref{sec:introduction} is the following conjecture due to Kosniowski \cite{MR585666}.

\begin{conjecture}
Let \(M\) be a  closed unitary \(S^1\)-manifold
of dimension \(2n\) with only isolated fixed points. If \(M\) does not bound a unitary \(S^1\)-manifold equivariantly, then the number of isolated fixed points is greater than or equal
to a linear function \(f (n)\) of \(n\).
\end{conjecture}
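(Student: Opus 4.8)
The plan is to attack the conjecture through equivariant index theory, extracting combinatorial constraints on the fixed-point weight data from the regularity of equivariant genera. Since the fixed points are isolated, each $p\in M^{S^1}$ carries a well-defined weight system $w_1(p),\dots,w_n(p)\in\mathbb{Z}\setminus\{0\}$, the rotation numbers of the $S^1$-representation on $T_pM$ with respect to the stable complex structure; the whole problem is to bound the number $N=|M^{S^1}|=\chi(M)$ below by a linear function of $n$. First I would exploit the Atiyah--Bott--Berline--Vergne localization formula for the equivariant $\chi_y$-genus, the equivariant holomorphic Euler characteristic twisted by $\Lambda_y$, which for isolated fixed points reads
\[
  \chi_y^{S^1}(M)(t)=\sum_{p\in M^{S^1}}\;\prod_{i=1}^{n}\frac{1+y\,t^{w_i(p)}}{1-t^{w_i(p)}}.
\]
The left-hand side is the character of a finite-dimensional virtual $S^1$-representation, hence a Laurent polynomial in $t$: the apparent poles of the right-hand side at roots of unity must therefore all cancel. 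For the specializations governed by the signature and the Witten genus one has in addition the stronger Bott--Taubes rigidity, constancy in $t$, which I would use to sharpen the constraints. The vanishing of every residue is the source of the relations among the weights that I want to mine.

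Next I would translate the non-bounding hypothesis into the non-triviality of the equivariant fixed-point data. By tom Dieck's localization theorem, together with the computations of Kosniowski--Stong, the class of $M$ in equivariant unitary cobordism is determined by its weight data, and $M$ bounds equivariantly exactly when this data is trivial in the localized cobordism ring. Feeding this non-triviality into the residue cancellations above, I would build a \emph{GKM-type labelled graph} $\Gamma$ on the set of fixed points: its edges record the isotropy $2$-spheres, the components of $M^{\mathbb{Z}_k}$ joining two fixed points, and each vertex has at most $n$ incident edges, one per weight. The residue identities at each primitive $k$-th root of unity then become balancing conditions along the edges whose labels are divisible by $k$, so that the geometry of $\Gamma$ is tightly coupled to the arithmetic of the weights, and a non-bounding manifold corresponds to a graph that cannot be trivialized.

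The final, and by far the hardest, step is the purely combinatorial one: to show that a graph $\Gamma$ satisfying all of these balancing conditions, and supporting non-trivial fixed-point data, must have at least $f(n)$ vertices with $f$ linear. Here I expect the main obstacle to lie. A single genus, such as the Todd genus, yields only a weak, essentially logarithmic, bound, because the weight systems are extremely flexible and a sparse graph can satisfy any finite collection of residue relations. The decisive input should be the \emph{whole modular family} of elliptic genera at once: the equivariant Witten genus is a Jacobi form, and its modular transformation behaviour couples the residue conditions at \emph{all} roots of unity simultaneously, producing infinitely many linked constraints rather than finitely many. My hope is that exploiting this modularity --- rather than any finite list of Chern-number identities --- forces the number of independent weight directions, and hence the number of vertices of $\Gamma$, to grow linearly in $n$. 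Proving that these modular constraints are not merely necessary but collectively sufficient to exclude sublinearly many fixed points is precisely the gap that keeps the conjecture open, and is where any genuinely new idea would have to enter.
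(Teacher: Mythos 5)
You have set out to prove a statement that the paper itself does not prove: this is Kosniowski's conjecture, which the paper records as an open conjecture and only uses as motivation. The paper's actual results go in the opposite, much weaker direction --- it rules out \emph{three} fixed points in most dimensions via ABBV localization plus integrality of the signature and Todd genus --- and its Corollary (at least \(4\) fixed points for non-bounding unitary manifolds in dimension \(>6\)) is the current state of the art obtained from that theorem together with earlier work of Kosniowski. So there is no ``paper proof'' to match your argument against, and any complete proof you offered would be a new research result, not a reconstruction.

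Your proposal is, by your own admission in the last paragraph, not a complete proof: the decisive step --- showing that the residue/balancing constraints force at least \(f(n)\) fixed points with \(f\) linear --- is exactly what is missing, and you correctly identify it as the open gap. Beyond that honest admission, two intermediate steps are also flawed as stated. First, Bott--Taubes rigidity of the equivariant Witten genus requires a string (or at least spin with \(p_1/2=0\)) hypothesis; a general unitary \(S^1\)-manifold satisfies no such condition, so the ``whole modular family'' of constraints you want to mine is not available --- only the rigidity of \(\chi_y\) (Kosniowski, Atiyah--Hirzebruch type) survives in this generality, and that single genus yields, as you note, far too little. Second, the GKM-type graph need not exist: for a circle action the weights at a fixed point are integers, several of which may share a common divisor \(k\), so components of \(M^{\mathbb{Z}_k}\) through a fixed point can have dimension greater than two and the ``edges are isotropy \(2\)-spheres'' picture breaks down without a genericity assumption on the weights that the conjecture does not grant. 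The tom Dieck/Kosniowski--Stong localization step (non-bounding \(\Leftrightarrow\) non-trivial fixed-point data) is the one part that is correct as stated. In short: a reasonable research program, but with one fatal ingredient (Witten-genus rigidity) unavailable in the unitary category and the key combinatorial lower bound absent, so the conjecture remains exactly as open as the paper says it is.
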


Here a unitary manifold is a smooth manifold together with a complex structure \(J\) on the stable tangent bundle \(TM\oplus \mathbb{R}^k\).
In this paper we assume that all \(S^1\)-actions on unitary manifolds are compatible with the given complex structure \(J\), i.e. \(J\) commutes with the \(S^1\)-action on \(TM\oplus \mathbb{R}^k=TM\times \mathbb{R}^k\).
Here \(S^1\) acts on \(TM\) by the differential of the action on \(M\) and trivially on the \(\mathbb{R}^k\) factor.
Note that every unitary manifold is orientable.

This conjecture and the above question have been studied before for different classes of manifolds and actions including actions on symplectic manifolds and almost complex manifolds (see for example \cite{MR776689}, \cite{MR2818694}, \cite{MR2802578},  \cite{MR2916364}, \cite{MR3272779}, \cite{MR3230015}, \cite{MR3506787},  \cite{zbMATH06498086}, \cite{zbMATH06738992}, \cite{MR3649458}, \cite{MR4161918}, \cite{MR4050069}).

Here we study Question~\ref{sec:introduction} for the classes of smooth closed orientable, spin and unitary manifolds with \(k=3\) with smooth actions (and in the unitary case with actions preserving the unitary structure).
Here a spin manifold is an orientable manifold whose second Stiefel-Whitney class vanishes.
For the above classes the following restrictions on \(n\) are known, when \(k\leq 3\).

 \begin{enumerate}
 \item Since the fixed point set \(M^{S^1}\) of any \(S^1\)-manifold \(M\) is a submanifold of even codimension, \(S^1\)-manifolds with at least one but at most finitely many fixed points can only exist in even dimensions.
   \item There is no closed orientable \(S^1\)-manifold of positive dimension with exactly one fixed point (see for example \cite[Remark 3.2]{zbMATH06498086}).
   \item In all even dimensions spheres with certain linear actions provide examples of \(S^1\)-manifolds with exactly two fixed points. More generally, if a closed orientable \(S^1\)-manifold has exactly two fixed points, then the isotropy representations at the two fixed points are isomorphic (see \cite[Theorem 6]{MR585666}).
  \item For a closed oriented \(S^1\)-manifold \(M\) with isolated fixed points the signature \(\sign(M)\) of \(M\) can be computed as
    \[\sign(M)=\sign(M^{S^1})\equiv |M^{S^1}| \mod 2\]
    (see \cite[Corollary 6.23]{MR1150492}).
    It follows that \(\dim M\equiv 0 \mod 4\) if \(M\) has an odd number of fixed points.
    If, moreover, \(M\) is spin then \(\dim M\equiv 0 \mod 8\).
    This is because if \(\dim M\equiv 4\mod 8\) and \(M\) is spin, then the intersection form of \(M\) is even and therefore \(\sign(M)\) is divisible by eight (see \cite[p. 114-115]{MR1189136} or \cite{MR615511} for a stronger result).
\item Examples of orientable manifolds admitting a circle action with exactly three fixed points are the projective planes \(\mathbb{C} P^2\), \(\mathbb{H} P^2\) and \(\mathbb{O} P^2\) of dimensions \(4\), \(8\) and \(16\), respectively.
  Note here that \(\mathbb{C} P^2\) admits a unitary structure, while
  \(\mathbb{H}P^2\) and \(\mathbb{O}P^2\) are spin.
\end{enumerate}

Our main result is that in most dimensions orientable, spin or unitary \(S^1\)-manifolds with three fixed points cannot exist.

To be more precise we show the following:

\begin{theorem}
  \label{sec:ll}
  Let \(M\) be a smooth, closed \(S^1\)-manifold with exactly three fixed points and \(\dim M >0\).
  \begin{enumerate}
  \item If \(M\) is orientable, then \(\dim M=4 \cdot 2^a\) or \(\dim M=8\cdot(2^a+2^b)\) with \(a,b\in \mathbb{Z}_{\geq 0}\), \(a\neq b\).
  \item If \(M\) is spin, then \(\dim M\in \{8,16\}\).
  \item If \(M\) is unitary, then \(\dim M=4\).
  \end{enumerate}
\end{theorem}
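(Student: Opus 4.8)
\emph{Overview and Step 1 (structure).} The plan is to first determine the rational cohomology ring of $M$ — forcing $M$ to be a ``rational projective plane'' with only two possibly nonzero Pontryagin numbers — and then to run a Hirzebruch-style integrality analysis of its characteristic numbers; the $S^1$-action will enter only to produce this cohomological rigidity, to supply $\hat A(M)=0$ in the spin case, and to compute the Todd genus in the unitary case. Since $M^{S^1}$ is finite, I would embed $M$ equivariantly into an orthogonal $S^1$-representation and restrict a generic invariant quadratic form to obtain an invariant Morse--Bott function whose critical set is exactly the three fixed points, with even indices. The Morse inequalities then give $\dim_\Q H^*(M;\Q)\le 3$, while $\chi(M)=|M^{S^1}|=3$ forces all rational cohomology into even degrees; with Poincaré duality this yields $H^*(M;\Q)\cong\Q[x]/(x^3)$, $\deg x=n:=\tfrac12\dim M$. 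Because $\dim M\equiv 0\bmod 4$, $n$ is even, the three fixed points appear as a source (all weights positive), a sink (all weights negative) and a saddle (half and half), and the rank-one intersection form on $H^n$ gives $\sign(M)=\pm1$.

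\emph{Step 2 (orientable).} As the rational cohomology sits in degrees $0,n,2n$, the only Pontryagin numbers that can survive are $p_{n/2}[M]$ and, when $4\mid n$, $p_{n/4}^2[M]$. Writing $\dim M=4k$, the relation $\sign(M)=\pm1$ becomes $a_k\,p_k[M]+b_k\,p_{k/2}^2[M]=\pm1$ (the second term absent for $k$ odd), with $a_k,b_k$ the coefficients of $p_k,p_{k/2}^2$ in Hirzebruch's $L_k$. Since Pontryagin numbers are integers, a necessary condition for existence is that this relation be solvable over $\mathbb{Z}$, i.e.\ that $\min\bigl(v_p(a_k),v_p(b_k)\bigr)\le 0$ for every prime $p$. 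The $a_k,b_k$ are explicit in Bernoulli numbers and in the factors $2^{2k-1}-1$, and I expect the core of the argument to be the arithmetic showing that this solvability holds exactly when $\dim M=4\cdot2^a$ or $\dim M=8(2^a+2^b)$ with $a\neq b$. Carrying out this valuation bookkeeping simultaneously at $2$ and at the odd primes dividing the Mersenne-type factors is, I expect, the main obstacle.

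\emph{Step 3 (spin).} Now $\dim M\equiv 0\bmod 8$, so $k$ is even and $P:=p_k[M]$, $Q:=p_{k/2}^2[M]$ are both defined. Since the action is nontrivial, the Atiyah--Hirzebruch vanishing theorem supplies a second equation $\hat A(M)=\alpha_kP+\beta_kQ=0$ alongside $a_kP+b_kQ=\pm1$. I would then solve this $2\times2$ system: it has a unique rational solution, and demanding $(P,Q)\in\mathbb{Z}^2$ is far more restrictive than in Step 2, cutting the admissible dimensions down to $\dim M\in\{8,16\}$. As a check, $\dim M=8$ returns $(P,Q)=(7,4)$ with $\sign=1$ and $\hat A=0$, realized by $\mathbb{H}P^2$.

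\emph{Step 4 (unitary).} A unitary structure makes all Chern numbers integral and gives $c_n[M]=\chi(M)=3$, while Step 1 leaves only $c_n[M]=3$ and $c_{n/2}^2[M]=:R$ nonzero. Rigidity of the Todd genus, via the holomorphic Lefschetz fixed point formula together with the source/sink picture, forces $\td(M)=1$, and rewriting $\sign(M)=\pm1$ through $p=c\bar c$ turns $\td(M)=1$ and $\sign(M)=\pm1$ into two relations in the single unknown $R$. I expect this over-determined system to be consistent only for $n=2$, i.e.\ $\dim M=4$, realized by $\mathbb{C}P^2$; that $\mathbb{H}P^2$ and $\mathbb{O}P^2$ carry no almost complex structure is consistent with dimensions $8$ and $16$ dropping out here.
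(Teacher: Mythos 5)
Your Step 1 is the foundation of everything else, and it contains a genuine error. An invariant function on an $S^1$-manifold is constant on orbits, so away from $M^{S^1}$ its critical set is a union of positive-dimensional orbits; you cannot in general produce an invariant Morse function whose critical set is exactly the three fixed points. Worse, the inequality you want, $\dim_{\Q} H^*(M;\Q)\le|M^{S^1}|$, is the reverse of what localization gives: Borel localization yields $|M^{S^1}|\le\dim_{\Q} H^*(M;\Q)$, with equality precisely in the equivariantly formal case. The paper explicitly notes that the action is equivariantly formal if and only if $M$ is a rational projective plane, and it does \emph{not} assume this. So your derivation of ``cohomology concentrated in degrees $0,n,2n$'' and of the source/sink/saddle picture collapses. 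The conclusions you extract from it (all Pontrjagin numbers vanish except possibly $p_{k/2,k/2}[M]$ and $p_k[M]$, and $\sign(M)=\pm1$) are true, but they must be proved by a different mechanism: the paper applies the ABBV localization formula to equivariant lifts $\tilde c_i$ of the characteristic classes, uses the freedom to replace $\tilde c_i$ by $\tilde c_i+at^i$ together with a Vandermonde-determinant argument to show all restrictions $\tilde c_i|_q$ coincide, and then kills every mixed characteristic number by choosing $a=-a_1$. Nothing about the ring $H^*(M;\Q)$ is, or needs to be, established.

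Even granting those conclusions, Steps 3 and 4 are not carried out and, as sketched, would not close. In Step 3 you replace the paper's input (a non-equivariant integrality theorem for spin manifolds resting on twisted $\hat A$-genera) by the single equation $\hat A(M)=0$; that equation is legitimately available from Atiyah--Hirzebruch, but the claim that integrality of the unique solution of your $2\times 2$ system forces $\dim M\in\{8,16\}$ \emph{is} the hard arithmetic, and it is nowhere verified. In Step 4 the assertion $\td(M)=1$ rests on the nonexistent source/sink picture; the Kosniowski/holomorphic Lefschetz formula only gives $\td(M)=\sum_{q:\,d_q=0}\epsilon'(q)$, which can perfectly well be $0$ for a unitary manifold. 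This matters concretely: in dimension $8$ the integrality constraints admit a consistent integral solution, namely $\sign(M)=1$, $c_4[M]=3$, $c_{2,2}[M]=1$, $\td(M)=0$, so your ``over-determined system'' is not inconsistent there; the paper needs a further fixed-point argument with the $\chi_y$-genus ($\chi_{-1}=3$ forces all $\epsilon'(q)=1$, then $\chi_1=1$ forces exactly one odd $d_q$ while the symmetry $d\mapsto 4-d$ forces a second) to exclude it. Step 2 is correctly oriented but likewise defers the essential $2$-adic computation ($\nu_2(s_k)=\wt(k)-1$, $\nu_2(s_{k,k})\ge\wt(k)-2$) that the paper imports from the rational-projective-plane literature.
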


Note that non-existence of \(12\)-dimensional oriented \(S^1\)-manifolds with exactly \(3\) fixed points has been shown recently by Jang \cite{jang21:_circl}.
Note, moreover, that he \cite{zbMATH06738992} has also shown that an almost complex \(S^1\)-manifold with three fixed points has dimension four.
His arguments were based on a detailed analysis of the possible isotropy representations at the fixed points.
He used a similar argument before for the case of symplectic manifolds \cite{MR3272779}.

By combining Theorem~\ref{sec:ll} with Theorem 5 of \cite{MR585666} or Corollary 1.5 of \cite{MR2916364} we get

\begin{corollary}
  Let \(M\) be a closed unitary \(S^1\)-manifold of dimension greater than \(6\) that does not bound equivariantly. Then there are at least \(4\) fixed points in \(M\).
\end{corollary}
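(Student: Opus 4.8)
The plan is to argue by contradiction, ruling out each value $|M^{S^1}|\in\{0,1,2,3\}$. A positive-dimensional component of $M^{S^1}$ would contain infinitely many fixed points, so I may assume $M^{S^1}$ is finite; recall too that unitary manifolds are orientable and even-dimensional and that $\dim M>6>0$, the dimension bound entering only in the three-point case. The whole argument rests on Theorem~\ref{sec:ll}(3) together with the cited structure theory for unitary $S^1$-manifolds with few fixed points, so the proof proper is a short bookkeeping of these four cases.

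I would then dispose of the cases $|M^{S^1}|\le 2$. If $M^{S^1}=\emptyset$ the manifold bounds equivariantly: the $S^1$-equivariant unitary bordism class of a closed unitary $S^1$-manifold is detected by its fixed-point (normal-bundle) data, so an empty fixed-point set forces the class to vanish, contradicting the standing hypothesis. The value $|M^{S^1}|=1$ is impossible already for orientable manifolds of positive dimension by point (2) of the introduction. For $|M^{S^1}|=2$ I would invoke the cited structure theory: by point (3) the two isotropy representations are isomorphic, and Theorem~5 of \cite{MR585666} (alternatively Corollary~1.5 of \cite{MR2916364}) upgrades this to the statement that $M$ again bounds equivariantly, contradicting the hypothesis once more.

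This leaves $|M^{S^1}|=3$, which is exactly the situation analysed in the main theorem: Theorem~\ref{sec:ll}(3) forces $\dim M=4$, incompatible with $\dim M>6$. Having excluded $0,1,2,3$, I conclude $|M^{S^1}|\ge 4$. The only step demanding real care is the two-point case: the substance lies in importing the classification of two-fixed-point unitary $S^1$-manifolds from \cite{MR585666} (or \cite{MR2916364}) and verifying that ``isomorphic isotropy representations'' genuinely yields equivariant nullbordism rather than merely a constraint on the local data; once that is secured, the zero- and one-point cases are formal and the main theorem settles the three-point case at a stroke.
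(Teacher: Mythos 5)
Your proposal is correct and follows essentially the same route as the paper, which derives the corollary precisely by combining Theorem~\ref{sec:ll}(3) (three fixed points force \(\dim M=4\)) with the cited results of Kosniowski and of \cite{MR2916364} stating that a closed unitary \(S^1\)-manifold with at most two fixed points bounds equivariantly. The paper leaves the case bookkeeping implicit, and your spelled-out exclusion of \(|M^{S^1}|\in\{0,1,2,3\}\) (including the observation that an infinite fixed-point set trivially gives at least four fixed points) is exactly the intended argument.
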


A \(S^1\)-action on a manifold \(M\) is called equivariantly formal if
\[\sum_{i\geq 0} b_i(M)=\sum_{i\geq 0} b_i(M^{S^1}),\]
where \(b_i(M)\) denotes the \(i\)-th Betti number of \(M\).
Moreover, for every \(S^1\)-action on a manifold \(M\), one has
\[\chi(M)=\chi(M^{S^1}),\]
where \(\chi(M)=\sum_{i\geq 0} (-1)^i b_i(M)\) denotes the Euler characteristic of \(M\).
Therefore a circle action on an orientable closed manifold \(M\) with exactly three fixed points is equivariantly formal if and only if \(M\) is a rational projective plane, i.e. if \(H^*(M;\mathbb{Q})=0\) for \(*\neq 0, \dim M, \frac{1}{2} \dim M\) and \(H^*(M;\mathbb{Q})=\mathbb{Q}\) for \(*= 0,\dim M, \frac{1}{2} \dim M\).

Hence one might guess that there are some similarities between those dimensions which support a rational projective plane and those dimensions which support an orientable \(S^1\)-manifold with three fixed points.
The former have been studied in \cite{MR3158765}, \cite{MR3500384},  \cite{MR3872943}, \cite{MR3999512}, \cite{hu21:_almos_betti} and \cite{su22:_almos_betti}.
One of the main tools in this study was the Hirzebruch signature theorem which expresses the signature of a manifold as a linear combination of its Pontrjagin numbers.
This was combined with the fact that a rational projective plane has signature one and can have at most two non-zero Pontrjagin numbers to deduce
 dimension restrictions for rational projective planes.

Similarly as a first major step in our study of orientable \(S^1\)-manifolds \(M\) with three fixed points we show that all except two of their Pontrjagin numbers vanish.
Moreover, we also show that the signature of \(M\) is one.
In this step we use the Atiyah--Bott--Berline--Vergne localization formula.
The arguments used in this step are similar to some arguments used before, for example in \cite{MR2802578}, \cite{zbMATH06194629}, \cite{MR2818694} and \cite{zbMATH06144281}.
To get from there to Theorem \ref{sec:ll} is then almost the same as in the study of rational projective planes.
We use arguments from \cite{MR3158765} and \cite{MR3500384} for the orientable case, from \cite{MR3999512} for the spin case and from \cite{hu21:_almos_betti} for the unitary case.

This paper is structured as follows.
In Section \ref{sec:preliminaries} we collect some basic facts about equivariant cohomology and the Hirzebruch signature theorem.
Then in Section~\ref{sec:proof-theorem-1} we prove Theorem~\ref{sec:ll}.

I would like to thank Anand Dessai for comments on an earlier version of this paper.

The research for this paper was funded by the Deutsche Forschungsgemeinschaft (DFG, German Research Foundation) under Germany's Excellence Strategy EXC 2044 –390685587, Mathematics M\"unster: Dynamics–Geometry–Structure and through CRC1442 Geometry: Deformations and Rigidity at University of M\"unster.

\section{Preliminaries}
\label{sec:preliminaries}

\subsection{Equivariant cohomology}

In this section we recall some facts about equivariant cohomology. For an introduction to this subject see \cite[Chapters 5 and 6]{MR1150492}.

Let \(M\) be an oriented \(S^1\)-manifold.
Then the equivariant cohomology of \(M\) is defined as
\[H_{S^1}^*(M;\mathbb{Q})=H^*(M_{S^1};\mathbb{Q}),\]
where \(M_{S^1}=ES^1\times_{S^1} M\) is the Borel construction of \(M\) and \(ES^1\rightarrow BS^1\) is the universal principal \(S^1\)-bundle.

Hence the inclusion of the fiber in the fibration \(M\rightarrow M_{S^1}\rightarrow BS^1\) induces a map \(\iota^*:H^*_{S^1}(M;\mathbb{Q})\rightarrow H^*(M;\mathbb{Q})\).
We say that a class \(\tilde{c}\in H^i_{S^1}(M;\mathbb{Q})\) is a lift of a class \(c\in H^i(M)\), \(i\geq 0\), if \(\iota^*(\tilde{c})=c\).

Moreover, the projection \(M_{S^1}\rightarrow BS^1\) induces a \(H^*(BS^1;\mathbb{Q})\)-algebra structure on \(H^*_{S^1}(M;\mathbb{Q})\).
Note that \(H^*(BS^1;\mathbb{Q})=\mathbb{Q}[t]\), where \(t\in H^2(BS^1;\mathbb{Q})\) is the Euler class of the universal \(S^1\)-bundle \(ES^1\rightarrow BS^1\).
Since \(\iota^*(t^i)=0\) for all \(i>0\), lifts of cohomology classes of degree \(2i\) are never unique.
If \(\tilde{c}\) is a lift of \(c\in H^{2i}(M;\mathbb{Q})\) then for every \(a\in \mathbb{Q}\), \(\tilde{c}+at^i\) is also a lift of \(c\).

If \(V\rightarrow M\) is an equivariant vector bundle over \(M\) then \(V_{S^1}\rightarrow M_{S^1}\) is a vector bundle over \(M_{S^1}\).
Hence one can define the equivariant characteristic classes of \(V\) as the characteristic classes of \(V_{S^1}\).
These are lifts of the ordinary characteristic classes of \(V\).

Next assume that the \(S^1\)-action on \(M\) has isolated fixed points and \(\dim M=2n\).

Then by \cite[Corollary 6.13]{MR1150492} we have for every \(\tilde{c}\in H^{2n}_{S^1}(M;\mathbb{Q})\),
\begin{equation}
  \label{eq:1}
\langle\iota^*(\tilde{c}),[M]\rangle=\sum_{q\in M^{S^1}} \frac{\tilde{c}|_q}{e(q)}\epsilon(q),
\end{equation}

where for a fixed point \(q\in M^{S^1}\),
\begin{itemize}
\item \(\tilde{c}|_q\) is the restriction of \(\tilde{c}\) to \(H^{2n}_{S^1}(q;\mathbb{Q})\cong H^{2n}(BS^1;\mathbb{Q})=\mathbb{Q}t^n\).

\item \(e(q)=t^n\prod_{i=1}^{n}\mu_i\) where the \(\mu_i\) are the weights of the \(S^1\)-representation \(T_qM\cong\bigoplus_{i=1}^{n} V_{\mu_i}\). Here \(V_{\mu_i}\) is \(\mathbb{C}\) with the \(S^1\)-action given by multiplication with \(g^{\mu_i}\) for \(g\in S^1\). We choose all \(\mu_i>0\).
\item \(\epsilon(q)=1\) if the orientation on \(T_qM\) agrees with the orientation induced by the complex structure on \(\bigoplus_{i=1}^{n} V_{\mu_i}\).
  Otherwise \(\epsilon(q)=-1\).
\end{itemize}

Note that, by \cite[Corollary 6.23]{MR1150492} with these choices we also have
\begin{equation}\label{eq:2}\sign(M)=\sum_{q\in M^{S^1}}\epsilon(q).\end{equation}

\subsection{The Hirzebruch signature theorem}

Here we recall some facts about the Hirzebruch signature theorem.
For a general introduction to this subject see \cite[Chapter 19]{MR0440554}.

Let \(k\) be a non-negative integer.
We say that a finite sequence \(I=(i_1,\dots,i_l)\) with \(0< i_1\leq i_2\leq\dots\leq i_l\) and \(\sum_{j=1}^l i_j=k\) is a partition of \(k\).
If \(M\) is an oriented manifold of dimension \(4k\) then we write \(p_I(M)=\prod_{j=1}^l p_{i_j}(M)\) for the partition \(I=(i_1,\dots,i_l)\) of \(k\). Here \(p_i(M)\in H^{4i}(M;\mathbb{Q})\) denotes the \(i\)-th Pontrjagin class of \(M\).
We call the \(p_I[M]=\langle p_I(M),[M]\rangle\) where \(I\) runs through all partitions of \(k\) the Pontrjagin numbers of \(M\).
Note that the Pontrjagin numbers of an oriented manifold are integers.

Similarly if \(M\) is a unitary manifold of dimension \(2k\) we write \(c_I(M)=\prod_{j=1}^l c_{i_j}(M)\) where \(c_i(M)\in H^{2i}(M;\mathbb{Q})\) denotes the \(i\)-th Chern class of \(M\).
We call the \(c_I[M]=\langle c_I(M),[M]\rangle\) where \(I\) runs through all partitions of \(k\) the Chern numbers of \(M\).
Note that the Chern numbers of an unitary manifold are integers.

According to the Hirzebruch signature theorem \cite[Theorem 19.4]{MR0440554}  the signature of a \(4k\)-dimensional oriented manifold \(M\) is equal to a linear combination of Pontrjagin numbers of \(M\),

\[\sign(M)=\sum_{I} s_Ip_I[M].\]

Here the sum runs over all partitions of \(k\) and the \(s_I\) are certain polynomials with rational coefficients in the Bernoulli numbers \(B_i\in \mathbb{Q}\).
The Bernoulli numbers \(B_i\) are defined by the equation
\[\frac{s}{e^s-1}=\sum_{i=0}^\infty\frac{B_i}{i!}s^i\]
for \(s\in \mathbb{C}\), \(|s|<2\pi\).
In particular (see \cite[Exercises 19-B and 19-C]{MR0440554}),
\begin{align*}
  s_k&=2^{2k}\left(2^{2k-1}-1\right)\frac{|B_{2k}|}{(2k)!}.
\end{align*}
Moreover, if \(k\) is even then
\[s_{k/2,k/2}=\frac{1}{2}\left(s_{k/2}^2-s_k\right).\]

Similarly if \(M\) is unitary of dimension \(2k\), one can write the Todd genus \(\td(M)\) of \(M\) as

\[\td(M)=\sum_I t_Ic_I[M],\]

where again the sum runs over all partitions of \(k\) and the \(t_I\) are again certain polynomials with rational coefficients in the Bernoulli numbers (see \cite[Exercise 19-A]{MR0440554}).
In particular,
\begin{align*}
  t_k&=(-1)^k\frac{B_{k}}{k!}.
\end{align*}
Moreover, if \(k\) is even then
\[t_{k/2,k/2}=\frac{1}{2}\left(t_{k/2}^2-t_k\right).\]

Note that the Todd genus of every unitary manifold is an integer.

\section{The proof of Theorem \ref{sec:ll}}
\label{sec:proof-theorem-1}

In this section we prove Theorem~\ref{sec:ll}.
The main technical input is the following lemma.
Its proof uses some arguments which are similar to arguments used in \cite{MR2802578}, \cite{zbMATH06194629}, \cite{MR2818694} and \cite{zbMATH06144281}.

\begin{lemma}
  \label{sec:proof-theorem}
  Let \(M\) be a closed \(S^1\)-manifold of dimension \(4k>0\) with exactly three fixed points. Then the following holds:
  \begin{enumerate}
  \item If \(M\) is oriented, then all Pontrjagin numbers of \(M\) except maybe \(p_{k/2,k/2}[M]\) and \(p_k[M]\) vanish. Moreover, \(\sign(M)=\pm 1\).
  \item If \(M\) is unitary then all Chern numbers of \(M\) except maybe \(c_{k,k}[M]\) and \(c_{2k}[M]\) vanish.
    Moreover, \(c_{2k}[M]\in \{\pm 1,\pm 3\}\).
  \end{enumerate}
\end{lemma}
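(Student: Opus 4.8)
The plan is to feed well-chosen equivariant characteristic classes into the localization formula \eqref{eq:1} and read off linear relations among the three local contributions. Write $M^{S^1}=\{q_1,q_2,q_3\}$, let $\mu_1(q),\dots,\mu_n(q)>0$ (so $n=2k$) be the weights at $q$, put $N(q)=\prod_i\mu_i(q)$ and $y_q=\epsilon(q)/N(q)\neq 0$. Since the equivariant Pontrjagin classes of a complex $S^1$-bundle are the elementary symmetric polynomials in the squares of its weights, writing $E_i(q)$ for the $i$-th elementary symmetric polynomial in $\mu_1(q)^2,\dots,\mu_n(q)^2$ and $E_I=\prod_r E_{i_r}$ gives $p_I^{S^1}(M)|_q=E_I(q)\,t^{2|I|}$. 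For a partition $I$ of $m<k$ the class $t^{2(k-m)}p_I^{S^1}(M)$ has degree $4k$ and is a lift of $0$ (because $\iota^*$ kills positive powers of $t$); applying \eqref{eq:1} yields the relations
\[
\sum_{j=1}^{3} y_{q_j}\,E_I(q_j)=0\qquad(|I|<k),
\]
while for $|I|=k$ the same computation gives $p_I[M]=\sum_{j} y_{q_j}E_I(q_j)$. The empty partition gives $\sum_j y_{q_j}=0$, and \eqref{eq:2} gives $\sign(M)=\sum_j\epsilon(q_j)$.

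I would settle the signature first. A sum of three signs is odd, so $\sign(M)\in\{\pm1,\pm3\}$; the value $\pm3$ would force all $\epsilon(q_j)$ to agree, hence $\sum_j y_{q_j}=\pm\sum_j N(q_j)^{-1}\neq0$, contradicting the relation above. Thus $\sign(M)=\pm1$.

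The heart of the argument is the vanishing. For each $i$ with $2i\le k-1$ the relations coming from the partitions $\emptyset,(i),(i,i)$ read $\sum_j y_{q_j}E_i(q_j)^m=0$ for $m=0,1,2$; as all $y_{q_j}$ are nonzero, the vanishing of this Vandermonde system forces $E_i(q_1),E_i(q_2),E_i(q_3)$ to coincide (if only two coincided, the third equation would force the odd-one-out $y_{q_j}$ to vanish). Hence $E_i(q)$ is independent of $q$ whenever $2i\le k-1$. Now take a partition $I=(i_1\ge\cdots\ge i_l)$ of $k$ with $l\ge2$ and $I\neq(k/2,k/2)$; one checks that its second-largest part obeys $2i_2\le k-1$, so every factor $E_{i_r}(q)$ with $r\ge2$ is constant in $q$. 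Pulling these constants out of $\sum_j y_{q_j}E_I(q_j)$ leaves a multiple of $\sum_j y_{q_j}E_{i_1}(q_j)$, which vanishes because $i_1\le k-1$. Therefore $p_I[M]=0$ for every partition except $(k)$ and $(k/2,k/2)$.

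The unitary case runs along the same scheme with Chern classes: now $T_qM$ carries the $S^1$-invariant structure $J$ with signed integer weights $\nu_1(q),\dots,\nu_n(q)$, one has $c_I^{S^1}(M)|_q=E_I(q)\,t^{|I|}$ with $E_i(q)$ the elementary symmetric polynomials in the $\nu_i(q)$, and $y_q=1/\prod_i\nu_i(q)=\epsilon(q)/N(q)$. The relations now run over $|I|<2k$, and repeating the Vandermonde-plus-factoring argument verbatim with $k$ replaced by $2k$ (so the two surviving partitions of $2k$ are $(2k)$ and $(k,k)$) kills all other Chern numbers; finally $c_{2k}[M]=\sum_j y_{q_j}\prod_i\nu_i(q_j)=\sum_j\epsilon(q_j)\,\sign\big(\prod_i\nu_i(q_j)\big)$ is a sum of three signs, hence lies in $\{\pm1,\pm3\}$. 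The main obstacle is this rigidity step: extracting from the low-degree relations the fact that enough elementary symmetric functions of the weights are \emph{literally} equal at all three fixed points. The Vandermonde argument does this cleanly precisely because there are exactly three fixed points with all $y_q\neq0$; the remaining delicacy is purely combinatorial, namely that $(k/2,k/2)$ (respectively $(k,k)$) is the only multi-part partition whose second-largest part can exceed $(k-1)/2$ (respectively $k-1$), which is exactly why it must be exempted.
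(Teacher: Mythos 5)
Your argument is correct and is essentially the paper's proof in different packaging: where you fix the canonical equivariant lifts and read off the relations \(\sum_j y_{q_j}E_i(q_j)^m=0\) for \(m=0,1,2\) from the partitions \(\emptyset,(i),(i,i)\), the paper obtains exactly the same three equations by varying the lift \(\tilde c_i\mapsto\tilde c_i+at^i\) and comparing coefficients of \(a\), and both then run the identical Vandermonde argument that exploits having only three fixed points with nonzero local contributions. The remaining differences are cosmetic --- you factor the now-constant small parts out of \(\sum_j y_{q_j}E_I(q_j)\) and invoke the single-part relation for the largest part, while the paper shifts the lift of one small part so that it restricts to zero at every fixed point --- and the signature and \(c_{2k}\) endgames coincide with the paper's.
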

\begin{proof}
  We only prove the claim in the unitary case. The proof in the oriented case goes along the same lines. One just has to replace Chern classes by Pontrjagin classes everywhere.

  Recall that in the case of unitary \(S^1\)-manifolds we assume that the \(S^1\)-action is compatible with the stable almost complex structure.
  Therefore we can choose lifts \(\tilde{c}_i\in H^{2i}_{S^1}(M;\mathbb{Q})\) of the Chern classes \(c_i=c_i(M)\in H^{2i}(M;\mathbb{Q})\).

  Let \(0<i< k\) and \(l=2k-2i> 0\).
  Define an equivalence relation \(\sim\) on \(M^{S^1}\) by \(q\sim q'\) if and only if \(\tilde{c}_i|_q=\tilde{c}_i|_{q'}\in H^{2i}(BS^1;\mathbb{Q})\). 
  Denote by \(A_1,\dots,A_m\) the equivalence classes of the relation and let \(a_n\in \mathbb{Q}\) such that \(a_nt^i=\tilde{c}_i|_q\) for \(q\in A_n\).

  Then for any \(a\in \mathbb{Q}\) we have, by Equation~(\ref{eq:1}),
  \begin{align*}
    0&=\langle \iota^*((\tilde{c}_i+at^i)^2t^l),[M]\rangle\\&=\sum_{q\in M^{S^1}}\frac{(\tilde{c}_i|_q + a t^i)^2 t^l}{e(q)}\epsilon(q)\\
                               &=\sum_{n=1}^m(a_n + a)^2 \sum_{q\in A_n}\frac{t^{2k}}{e(q)}\epsilon(q)\\
    &=\sum_{j=0}^2\binom{2}{j} a^{2-j}\sum_{n=1}^m a_n^j \sum_{q\in A_n}\frac{t^{2k}}{e(q)}\epsilon(q).
  \end{align*}
  
  Since the left hand side is independent of \(a\), the \(a_n\) are pairwise distinct and \(m\leq 3\),  we find, by comparing coefficients in \(a\),
  \[\sum_{q\in A_n}\frac{t^{2k}}{e(q)}\epsilon(q)=0,\]
  for all \(n=1,\dots,m\) because the Vandermond determinant
  \[\det (a_n^{j-1})_{1\leq j,n\leq m}=\prod_{n_2<n_1}(a_{n_1}-a_{n_2})\]
  is non-zero
  (see \cite[p. 149]{zbMATH05031717}). In particular, \(|A_n|\geq 2\) for all \(n\), because \(\frac{t^{2k}}{e(q)}\epsilon(q)\neq 0\) for all \(q\in M^{S^1}\).
  But since \(|M^{S^1}|=3\) this implies \(m=1\), i.e. \(\tilde{c}_i|_q\) is independent of \(q\in M^{S^1}\).

  Next consider a partition \(I\) of \(2k-i\).
  Then, for every \(a\in \mathbb{Q}\), we have, by Equation~(\ref{eq:1}),
 \begin{align*}
    c_{i,I}[M]&=\sum_{q\in M^{S^1}}\frac{(\tilde{c}_i|_q + a t^i)\tilde{c}_I|_q}{e(q)}\epsilon(q)\\
    &=(a_1 + a) \sum_{q\in M^{S^1}}\frac{\tilde{c}_I|_qt^{i}}{e(q)}\epsilon(q).
 \end{align*}
 
 Therefore by putting \(a=-a_1\), \(c_{i,I}[M]=0\) follows.
 
  Hence the only non-trivial Chern numbers can be \(c_{k,k}[M]\) and \(c_{2k}[M]\).
  
  The claim about \(c_{2k}[M]\) follows from Equation~(\ref{eq:1}) because we can choose \(\tilde{c}_{2k}\) such that \(\tilde{c}_{2k}|_q=\pm e(q)\) for all \(q\in M^{S^1}\).
  The claim about the signature follows from Equations~(\ref{eq:1}) and~(\ref{eq:2}) because, by
  \[0=\langle\iota^*(t^{2k}),[M]\rangle=\sum_{q\in M^{S^1}} \frac{t^{2k}}{e(q)}\epsilon(q),\]
    not all \(\epsilon(q)\) can have the same sign.
\end{proof}

\subsection{Oriented manifolds}

In this subsection we prove the claim about oriented manifolds in Theorem~\ref{sec:ll}.
In this case the claim basically follows from the arguments in the
proof of Lemma 3.2 of \cite{MR3158765} and the
proof of Theorem A in \cite[Section 2]{MR3500384} in combination with Lemma~\ref{sec:proof-theorem}.
Note, however, that the cited proofs are formulated with the application to rational projective planes in mind.
Therefore for the convenience of the reader we also include the argument here.

Let \(M\) be an oriented \(S^1\)-manifold of positive dimension with exactly three fixed points.
Then by Lemma~\ref{sec:proof-theorem} and the Hirzebruch signature theorem  we have
\[s_{2k+1}p_{2k+1}[M]=\sign(M)=\pm 1\]
if \(\dim M= 8k +4\), and 
\[s_{k,k}p_{k,k}[M]+s_{2k}p_{2k}[M]=\sign(M)=\pm 1\]
if \(\dim M=8k\).

As shown in the proof of Proposition 2.1 of \cite[p. 218]{MR3500384}, we have \(\nu_2(s_k)=\wt(k)-1\) and \(\nu_2(s_{k,k})\geq \wt(k)-2\), where:

\begin{itemize}
\item The Hamming weight function \(\wt : \mathbb{Z}\rightarrow  \mathbb{Z}_{\geq 0}\) sends an integer \(x\) to the number of \(1\)’s in the
binary expansion of \(|x|\).
\item The \(2\)-adic valuation, \(\nu_2:\mathbb{Z}\rightarrow \mathbb{Z}_{\geq 0}\cup\{\infty\}\), sends \(x\neq 0\) to the largest integer \(\nu\) so that \(2^\nu\) divides \(x\).
  Moreover, we set \(\nu_2(0)=\infty\).
\item  The extended \(2\)-adic valuation \(\nu_2 : \mathbb{Q}\rightarrow \mathbb{Z}\cup\{\infty\}\) is defined as \(\nu_2\left( \frac{x}{y} \right) = \nu_2 (x) - \nu_2(y)\).
\end{itemize}

Hence we get
\begin{align*}0&=\nu_2(\sign(M))=\nu_2(s_{2k+1})+\nu_2(p_{2k+1}[M])\\
               &\geq \nu_2(s_{2k+1})=\wt(2k+1)-1\end{align*}
if \(\dim M=8k+4\).
Therefore \(k=0\) follows in this case.

In the case \(\dim M=8k\) we have
\begin{align*}0&=\nu_2(\sign(M))\\
  &\geq\min(\nu_2(s_{2k})+\nu_2(p_{2k}[M]),\nu_2(s_{k,k})+\nu_2(p_{k,k}[M]))\\
  &\geq\wt(k)-2.\end{align*}
Hence \(k= 2^a\) or \(k=2^a+2^b\) with \(a,b\geq 0\), \(a\neq b\) follows in this case.
This proves the claim in Theorem~\ref{sec:ll} about oriented manifolds.

\subsection{Spin manifolds}

In this subsection we prove the claim about spin manifolds in Theorem~\ref{sec:ll}.

As in the proof of Corollary 19 in \cite{MR3999512} one sees that if there is a spin manifold of dimension \(4k\) with

\begin{enumerate}
\item the only possibly non-zero Pontrjagin numbers of \(M\) are \(p_{k/2,k/2}[M]\) and \(p_{k}[M]\), and
\item \(\sign(M)=\pm 1\)
\end{enumerate}
then \(k=2\) or \(k=4\).

In combination with our Lemma \ref{sec:proof-theorem} the claim about spin manifolds in Theorem~\ref{sec:ll} follows.

The proof of \cite[Corollary 19]{MR3999512} is based on
\cite[Theorem 18]{MR3999512}.
Note that while this theorem is stated in loc. cit. only for simply connected spin manifolds it also holds for non-simply connected spin manifolds.
This is because by Assertion 1 in \cite[p. 202]{zbMATH03189976} every spin bordism class in dimension at least six can be represented by a two-connected manifold and Pontrjagin numbers as well as the signature are spin bordism invariants.

\subsection{Unitary manifolds}

In this section we prove the claim about unitary \(S^1\)-manifolds in Theorem~\ref{sec:ll}.
The proof uses arguments from \cite{hu21:_almos_betti} for dimensions at least \(16\) and some arguments which are specific to dimension \(8\).

It follows from the proof of Proposition 3.2 in \cite{hu21:_almos_betti} that for \(k\geq 2\) there is no \(8k\)-dimensional unitary manifold \(M\) such that
\begin{enumerate}
\item all Chern numbers except maybe \(c_{4k}[M]\) and \(c_{2k,2k}[M]\) vanish,
\item  \(\sign(M)\neq 0\) and
\item  \(\max\{|\sign(M)|,|c_{4k}[M]|\}\leq 8\).
\end{enumerate}

Note, here, that the proof in loc. cit. is only formulated for almost complex manifolds.
However, by replacing the Euler characteristic by the characteristic number \(c_{4k}[M]\) in all arguments in loc. cit. (proof of Proposition 3.2 and Section 2) one gets the above statement.
Note here that for almost complex manifolds \(M\) we have \(c_{4k}[M]=\chi(M)\).
Alternatively, one can use that every unitary bordism class can be represented by a complex manifold (see \cite{zbMATH03214366}) to deduce the above statement from the corresponding statement for almost complex manifolds.
Hence, for \(k\geq 2\) there is no \(8k\)-dimensional unitary manifold satisfying the constraints of Lemma~\ref{sec:proof-theorem}.

So we only have to exclude the possibility of the existence of an \(8\)-dimensional unitary \(S^1\)-manifold with exactly three fixed points.
Assume there is such a manifold \(M\).
After changing the orientation of \(M\) if necessary we can assume \(\sign(M)>0\).

The Pontrjagin classes of the unitary manifold \(M\) can be computed from the Chern classes of the manifold as follows:
\[p_k(M)=\sum_{i=0}^{2k}(-1)^{i+k}c_i(M)c_{2k-i}(M).\]
So, by Lemma~\ref{sec:proof-theorem}, we have
\begin{align*}
  p_{1,1}[M]&=4c_{2,2}[M],&p_{2}[M]&=2c_{4}[M]+c_{2,2}[M].
\end{align*}

By combining this with the Hirzebruch signature theorem we get
\begin{equation}
  \label{eq:3}\sign(M)=\left(2s_1^2-s_{2}\right)c_{2,2}[M]+ 2 s_{2} c_{4}[M],\end{equation}
with 
\(s_1=\frac{1}{3}\) and \(s_2=\frac{7}{45}\).

We also have
\begin{equation}\label{eq:4}\td(M)=\frac{1}{2}\left(t_{2}^2-t_{4}\right)c_{2,2}[M] + t_{4}c_{4}[M]\end{equation}
with \(t_2=\frac{1}{12}\) and \(t_4=-\frac{1}{720}\).

Since the Todd genus of an unitary manifold as well as its Chern numbers are integers,
Equations~(\ref{eq:3}) and (\ref{eq:4}) have only one solution satisfying the constraints from Lemma~\ref{sec:proof-theorem}, namely
\begin{align*}
  \sign(M)&=1&c_{4}[M]&=3&c_{2,2}[M]&=1&\td(M)&=0.
\end{align*}

To exclude this solution we consider the Hirzebruch \(\chi_y\)-genus of \(M\).
By the Kosniowski formula (see \cite[Section 3]{zbMATH03378250}, \cite{zbMATH03307319}), we have
\[\chi_y(M)=\sum_{q\in M^{S^1}} (-y)^{d_q}\epsilon'(q)=\sum_{q\in M^{S^1}} (-y)^{4-d_q}\epsilon'(q).\]
Here, for \(q\in M^{S^1}\), \(d_q=\dim_{\mathbb{C}}\bigoplus_{i,\mu_i'>0} V_{\mu_i'}\), where \(T_qM=\bigoplus_{i=1}^4V_{\mu_i'}\), \(\mu_i'\in \mathbb{Z}\), is the decomposition of the complex \(S^1\)-representation \(T_qM\) into irreducible subrepresentations. Moreover, \(\epsilon'(q)\in \{\pm 1\}\).

  Since \(\chi_{-1}(M)=c_4[M]=3\), we must have \(\epsilon'(q)=1\) for all \(q\in M^{S^1}\).
  Because \(\chi_1(M)=\sign(M)=1\), there must be exactly one \(q\in M^{S^1}\) such that \(d_q\) is odd.
  There must also be one \(q'\in M^{S^1}\) such that \(d_{q'}=4-d_q\).
  But then \(d_{q'}\) is also odd and not equal to \(d_q\).
  So \(q'\neq q\), a contradiction.

  Hence we have proved the claim about unitary manifolds in Theorem~\ref{sec:ll}.

%-----------------------------------------------------------------------
% End of amsart.template
%-----------------------------------------------------------------------

\bibliography{refs}{}

\begin{thebibliography}{CKP12}

\bibitem[AM19]{MR3872943}
Michael Albanese and Aleksandar Milivojevi\'{c}.
\newblock On the minimal sum of {B}etti numbers of an almost complex manifold.
\newblock {\em Differential Geom. Appl.}, 62:101--108, 2019.

\bibitem[Bos06]{zbMATH05031717}
Siegfried Bosch.
\newblock {\em Lineare {Algebra}}.
\newblock Springer-Lehrb. Berlin: Springer, 3rd edition, 2006.

\bibitem[CKP12]{MR2916364}
Hyun~Woong Cho, Jin~Hong Kim, and Han~Chul Park.
\newblock On the conjecture of {K}osniowski.
\newblock {\em Asian J. Math.}, 16(2):271--278, 2012.

\bibitem[FS16]{MR3500384}
Jim Fowler and Zhixu Su.
\newblock Smooth manifolds with prescribed rational cohomology ring.
\newblock {\em Geom. Dedicata}, 182:215--232, 2016.

\bibitem[GPS15]{MR3506787}
Leonor Godinho, \'{A}lvaro Pelayo, and Silvia Sabatini.
\newblock Fermat and the number of fixed points of periodic flows.
\newblock {\em Commun. Number Theory Phys.}, 9(4):643--687, 2015.

\bibitem[GS14]{MR3230015}
Leonor Godinho and Silvia Sabatini.
\newblock New tools for classifying {H}amiltonian circle actions with isolated
  fixed points.
\newblock {\em Found. Comput. Math.}, 14(4):791--860, 2014.

\bibitem[Hat85]{MR776689}
Akio Hattori.
\newblock {$S^1$}-actions on unitary manifolds and quasi-ample line bundles.
\newblock {\em J. Fac. Sci. Univ. Tokyo Sect. IA Math.}, 31(3):433--486, 1985.

\bibitem[HBJ92]{MR1189136}
Friedrich Hirzebruch, Thomas Berger, and Rainer Jung.
\newblock {\em Manifolds and modular forms}.
\newblock Aspects of Mathematics, E20. Friedr. Vieweg \& Sohn, Braunschweig,
  1992.
\newblock With appendices by Nils-Peter Skoruppa and by Paul Baum.

\bibitem[Hir60]{zbMATH03214366}
Friedrich Hirzebruch.
\newblock Komplexe {Mannigfaltigkeiten}. ({Complex} manifolds).
\newblock Proc. {Int}. {Congr}. {Math}. 1958, 119-136 (1960)., 1960.

\bibitem[HT72]{zbMATH03378250}
A.~Hattori and H.~Taniguchi.
\newblock Smooth {{\(S^1\)}}-action and bordism.
\newblock {\em J. Math. Soc. Japan}, 24:701--731, 1972.

\bibitem[Hu21]{hu21:_almos_betti}
Jiahao Hu.
\newblock Almost complex manifolds with total {B}etti number three.
\newblock Preprint, arXiv:2108.06067, to appear in J. Topol. Ana., 2021.

\bibitem[Jan14]{MR3272779}
Donghoon Jang.
\newblock Symplectic periodic flows with exactly three equilibrium points.
\newblock {\em Ergodic Theory Dynam. Systems}, 34(6):1930--1963, 2014.

\bibitem[Jan17]{zbMATH06738992}
Donghoon Jang.
\newblock Circle actions on almost complex manifolds with isolated fixed
  points.
\newblock {\em J. Geom. Phys.}, 119:187--192, 2017.

\bibitem[Jan20a]{MR4161918}
Donghoon Jang.
\newblock Circle actions on 8-dimensional almost complex manifolds with 4 fixed
  points.
\newblock {\em J. Fixed Point Theory Appl.}, 22(4):Paper No. 95, 17, 2020.

\bibitem[Jan20b]{MR4050069}
Donghoon Jang.
\newblock Circle actions on almost complex manifolds with 4 fixed points.
\newblock {\em Math. Z.}, 294(1-2):287--319, 2020.

\bibitem[Jan21]{jang21:_circl}
Donghoon Jang.
\newblock Circle actions on oriented manifolds with 3 fixed points and
  non-existence in dimension 12.
\newblock Preprint, arXiv:2107.09424, 2021.

\bibitem[JT17]{MR3649458}
D.~Jang and S.~Tolman.
\newblock Hamiltonian circle actions on eight-dimensional manifolds with
  minimal fixed sets.
\newblock {\em Transform. Groups}, 22(2):353--359, 2017.

\bibitem[Kaw91]{MR1150492}
Katsuo Kawakubo.
\newblock {\em The theory of transformation groups}.
\newblock The Clarendon Press, Oxford University Press, New York, 1991.

\bibitem[Kos70]{zbMATH03307319}
C.~Kosniowski.
\newblock Applications of the holomorphic {Lefschetz} formula.
\newblock {\em Bull. Lond. Math. Soc.}, 2:43--48, 1970.

\bibitem[Kos80]{MR585666}
Czes Kosniowski.
\newblock Some formulae and conjectures associated with circle actions.
\newblock In {\em Topology {S}ymposium, {S}iegen 1979 ({P}roc. {S}ympos.,
  {U}niv. {S}iegen, {S}iegen, 1979)}, volume 788 of {\em Lecture Notes in
  Math.}, pages 331--339. Springer, Berlin, 1980.

\bibitem[KS19]{MR3999512}
Lee Kennard and Zhixu Su.
\newblock On dimensions supporting a rational projective plane.
\newblock {\em J. Topol. Anal.}, 11(3):535--555, 2019.

\bibitem[Li12]{zbMATH06144281}
Ping Li.
\newblock Circle action, lower bound of fixed points and characteristic
  numbers.
\newblock {\em J. Fixed Point Theory Appl.}, 11(2):245--251, 2012.

\bibitem[Li15]{zbMATH06498086}
Ping Li.
\newblock Circle action with prescribed number of fixed points.
\newblock {\em Acta Math. Sin., Engl. Ser.}, 31(6):1035--1042, 2015.

\bibitem[LL11]{MR2802578}
Ping Li and Kefeng Liu.
\newblock Some remarks on circle action on manifolds.
\newblock {\em Math. Res. Lett.}, 18(3):437--446, 2011.

\bibitem[LT11]{zbMATH06194629}
Zhi L{\"u} and Qiangbo Tan.
\newblock Equivariant {Chern} numbers and the number of fixed points for
  unitary torus manifolds.
\newblock {\em Math. Res. Lett.}, 18(6):1319--1325, 2011.

\bibitem[Mil63]{zbMATH03189976}
John~W. Milnor.
\newblock Spin structures on manifolds.
\newblock {\em Enseign. Math. (2)}, 9:198--203, 1963.

\bibitem[MS74]{MR0440554}
John~W. Milnor and James~D. Stasheff.
\newblock {\em Characteristic classes}.
\newblock Annals of Mathematics Studies, No. 76. Princeton University Press,
  Princeton, N. J.; University of Tokyo Press, Tokyo, 1974.

\bibitem[Och81]{MR615511}
S.~Ochanine.
\newblock Signature modulo {$16$}, invariants de {K}ervaire
  g\'{e}n\'{e}ralis\'{e}s et nombres caract\'{e}ristiques dans la
  {$K$}-th\'{e}orie r\'{e}elle.
\newblock {\em M\'{e}m. Soc. Math. France (N.S.)}, (5):142, 1980/81.

\bibitem[PT11]{MR2818694}
Alvaro Pelayo and Susan Tolman.
\newblock Fixed points of symplectic periodic flows.
\newblock {\em Ergodic Theory Dynam. Systems}, 31(4):1237--1247, 2011.

\bibitem[Su14]{MR3158765}
Zhixu Su.
\newblock Rational analogs of projective planes.
\newblock {\em Algebr. Geom. Topol.}, 14(1):421--438, 2014.

\bibitem[Su22]{su22:_almos_betti}
Zhixu Su.
\newblock Almost complex manifolds with {B}etti number $b_i=0$ except
  $i=0,n/2,n$.
\newblock Preprint, arXiv:2204.04800, 2022.

\end{thebibliography}
\bibliographystyle{alpha}
\end{document}